\documentclass[10pt,twocolumn,a4paper]{article}
\usepackage[utf8]{inputenc}
\usepackage[T1]{fontenc}
\usepackage{lmodern}
\usepackage[margin=20mm,columnsep=7mm]{geometry}
\usepackage{microtype}
\usepackage[british]{babel}
\usepackage{float}
\usepackage[mathscr]{euscript}
\usepackage{amsmath, amssymb, amsfonts,amsthm}
\usepackage{mathrsfs}
\usepackage{xcolor}
\usepackage{faktor}
\usepackage{graphicx}
\usepackage{epstopdf}
\usepackage{hyperref}

\newtheorem{proposition}{Proposition}[section]

\theoremstyle{definition}

\newtheorem{definition}{Definition}[section]

\title{Frequency Structure of AMM Liquidity Pools: A Multiresolution Analysis of Regime Transitions}
\author{Dennis N. Páez Chala\\ \small  Institut Universitari de Matemàtica Pura i Aplicada\\ \small Universitat Politècnica de València\\ \texttt{dnpaecha@posgrado.upv.es}
  \and Alfred Peris\\ \small  Institut Universitari de Matemàtica Pura i Aplicada\\ \small Universitat Politècnica de València\\ \texttt{aperis@upv.es}}
\date{}
\begin{document}

\makeatletter
\twocolumn[\begin{@twocolumnfalse}
\maketitle
\begin{abstract}
This study examines the temporal structure of liquidity pools in constant-product automated market makers. Using a sample of eighteen pools observed at a five-minute frequency over two years, it aims to identify the temporal scales at which interactions within the system occur and how these interactions reorganize in response to changes in the volatility regime. Methodologically, the study uses a past-looking Haar multiresolution decomposition, so that the wavelet features at each date depend only on observations available by that date. Regime labels additionally depend on the parameter-estimation protocol used for the Markov model. Within this decomposition, three dimensions are examined: the scale composition of the return-to-risk relationship, the frequency-band coupling between pool activity and price volatility, and the detection of regime transitions using a fluctuation-to-trend indicator. The results reveal a clear structural segmentation: pool variability is concentrated at fast scales, whereas the relationships among its series, signal persistence, and information on subsequent performance reside at slow scales. Aggregate measures do not distinguish this separation because they combine both components before they can be compared.
\end{abstract}
\vspace{1.5em}
\end{@twocolumnfalse}]
\makeatother

\section{Introduction}

This study examines the frequency structure of liquidity-pool time series\footnote{In the decentralised finance (DeFi) ecosystem, a liquidity pool is a smart contract that holds reserves of two or more assets to enable automated exchanges without intermediaries. This mechanism, governed by algorithms known as automated market makers (AMMs), replaces the traditional order book used in centralised markets with a mathematical curve that determines the execution price according to the remaining reserves.}: how their variability is distributed across temporal scales and what this distribution reveals when the market changes state. The objective is to determine whether decomposition by scale, through multiresolution analysis (MRA), reveals a structure that aggregate analysis combines or fails to identify: 
%\AP{A continuación pones ``both'', pero aparecen 3, y con 2 ``and''}
%\NP{Corregido}
structure in the relationships among the pool's time series, in the quality of the signals produced by its indicators and in the information those signals contain about its future behaviour. As an overarching constraint, the multiresolution features are constructed causally, using observations no later than the date they describe. Strict real-time interpretation of estimated regime labels additionally requires a historical parameter-estimation scheme.

Scale-based analysis requires frequency-domain tools. The classical approach, based on the Fourier transform, presupposes, for its interpretation, that the statistical properties of a signal do not change over time \cite{percival2000}. However, financial time series characteristically violate this condition: their level, variance, and internal relationships change continuously, which defines them as non-stationary processes. \textit{Wavelets} overcome this limitation by localising frequency analysis simultaneously in time and scale \cite{crowley2007}, thereby making it possible to describe which variations dominate the behaviour of the pool at each point in time.

Nevertheless, the characterisation of these frequencies becomes analytically meaningful only when considered in relation to the market context. Therefore, the notion of a change of state---understood as the alternation between persistent periods of calm and turbulence---requires its own frame of reference. To label these transitions, we adopt Hamilton's Markov regime-switching model \cite{hamilton1989new}, specified in this study using two states. This framework is selected because the model identifies the starting dates of turbulent episodes from the data and is independent of the \textit{wavelet} structure. In this way, the definition of the event and the instrument used to analyse it remain separate.

Having presented the two tools, it remains to introduce the system to which they are applied, which originates in decentralised finance. In an automated market maker (AMM), the exchange price is determined by a formula applied to the reserves deposited in a contract. The liquidity pool is this deposit: liquidity providers contribute the two assets of a pair to facilitate transactions in exchange for the fees generated by each transaction, while remaining exposed to price volatility \cite{milionis2022lvr}.

There are two main generations of these pools. In V2 pools, all liquidity providers share the same position across the entire price range \cite{adams2020uniswapv2}, whereas, in V3 pools, each provider selects an individual range \cite{adams2021uniswapv3}. This study focuses on V2 pools for a methodological identification reason: the homogeneity of their positions ensures that the observable pool time series describe a single common system, without the additional dimension introduced by individual range choices.

With the system and the analytical tools defined, the methodological proposal consists of decomposing the pool's time series using a causal \textit{wavelet} operator and analysing the resulting components in three directions: first, measuring the energy of each band of scales to determine how much variability it carries at each point in time; second, correlating these energies band by band to identify the specific scales at which pool activity and price volatility become synchronised; and third, classifying the signals produced by the pool indicators according to the temporal scale from which they originate. The question therefore shifts from how much the series vary to the speed at which each phenomenon occurs.

Regarding the data, the analysis uses five-minute time series---price, volume, and deposited capital---from V2 pools over a two-year period. Data processing is implemented in Python, while the analysis and figures are produced in MATLAB.

The remainder of this study is organised as follows. Section 2 presents the literature review. Section 3 introduces the foundations of multiresolution analysis. Section 4 details the methodology: the \textit{wavelet} transform, the causal operator, band energies, scale-wise correlation, and the regime model. Section 5 describes the structure of the problem, where the liquidity-pool variables are formalised. Section 6 presents the data and details the sources and the selected panel of pools. Finally, Section 7 presents the results, and Section 8 provides the conclusions of the study.

%-----------------------------------

\section{Literature Review}

In traditional finance, Hamilton's Markov regime-switching model \cite{hamilton1989new} provides the standard formalisation for the alternation between persistent calm and turbulent states. Later refinements allow transition probabilities to depend on duration \cite{durland1994} or on a vector of time-varying observable variables $z_t$ \cite{diebold1994}, while empirical evidence confirms that a two-state specification is sufficient to capture equity returns \cite{angbekaert2002}. Here, the two-state model serves solely to identify regime transitions, leaving the frequency structure around them to be evaluated using a separate framework.

In this context, decentralised finance not only captures these classical dynamics of alternation but also makes the variables describing the state of the system publicly available at the same temporal resolution as the price. This availability allows the vector \(z_t\) to cease depending on approximations estimated externally and instead be constructed from direct measurements of the market itself. Consequently, it is possible to investigate not only when a regime transition occurs, but also the specific temporal scale at which this change manifests itself in each of these state variables \cite{adams2020uniswapv2}.

The question of the temporal scales at which assets concentrate their fluctuations is examined by Gençay, Selçuk, and Whitcher \cite{gencay2002introduction}, who formalise \textit{wavelet} analysis as an econometric tool adapted to the non-stationary nature of financial time series. Crowley \cite{crowley2007}, in turn, structures the economic questions addressed by this multiresolution analysis. Thus, the existing literature supports the methodological shift introduced in this study: moving the focus away from the aggregate variability of the signal and towards identifying the specific temporal scales at which its dynamics operate.

The feasibility of applying this decomposition to non-stationary series lies in the localisation properties of the \textit{wavelet} basis. Unlike Fourier trigonometric functions, which have global support and distribute any local event across all coefficients, \textit{wavelets} have compact support. Since they are generated through translations and dilations in time and scale, a shock affects only the coefficients whose supports overlap with it. Because this basis is orthogonal and complete \cite{daubechies1992}, the original series can be reconstructed exactly, providing the advantage of identifying simultaneously when and at what scale each fluctuation occurs.

The application of this decomposition to financial series has two precedents aimed at isolating short-term fluctuations from the underlying price trend. In the first, Beltrani \cite{beltrani2022crashes} developed a ratio indicator to measure rapid volatility relative to the general trend and applied it to United States stock market indices, identifying six clusters of dates that coincide with different historical crises.

Vicente Arévalo \cite{vicente2026}, in turn, evaluates the sectors of the S\&P 500 using an indicator that relates the average of the absolute differences between consecutive detail coefficients to the mean value of the approximation at the same level, thereby measuring the variability of the detail fluctuation at a single temporal scale. Although the indicator does not always detect the onset of crises, the study shows that returns stabilise rapidly and become positive again after each event has passed.

In the field of cryptoassets, scale-based analysis has revealed temporal dynamics that aggregate methods fail to capture. On the one hand, Kristoufek \cite{kristoufek2015main} demonstrates that the explanatory factors of the Bitcoin price operate across different frequency bands and vary according to the market phase. Using a causal approach, Alvarez-Ramirez, Espinosa-Paredes, and Vernon-Carter \cite{alvarez2025causal} implement a \textit{wavelet} analysis that uses only past information and find a contemporaneous coincidence between a greater concentration of energy at certain scales and Bitcoin bear markets. Finally, Omane-Adjepong, Alagidede, and Akosah \cite{omane2019wavelet} analyse persistence and efficiency in these markets through \textit{wavelets} across different regimes and show that volatility persistence is highly sensitive to both the temporal scale and the state of the market.

By contrast, multiscale analysis of automated market makers (AMMs) remains at an early stage. A relevant precedent is the study by Watorek and collaborators \cite{wkatorek2024approaching}, who find that the complexity of decentralised trading is not constant but emerges during large fluctuations and manifests itself more strongly in volume than in returns. The remaining literature has been limited to studying price properties through traditional approaches, without analysing the pool as a system or the distribution of its variability across scales, leaving a methodological gap that this study seeks to address.

%-----------------------------------
\section{Theoretical Framework}

The multiresolution analysis (MRA) considered in the introduction can be described as a hierarchy of vector spaces designed to represent functions at different levels of precision; it is, in itself, a formal mathematical framework for decomposing functions and signals across multiple resolution scales \cite{bookMPR2004,walker2008primer}.

This section develops that framework in three stages: the Haar transform, the discrete \textit{wavelet} transform (DWT) derived from it, and the resulting multiresolution analysis (MRA). On this basis, the subsequent methodology section adapts this structure to present the specific transform variant and its causal formulation.

In particular, discrete series \(f\) expressed as
\[
f=(f_1,f_2,\ldots,f_N)\in\mathbb{R}^N,
\]
are considered, where \(N\) is the number of observations in the series and \(f_n\) is the value observed at time \(n\).

%\AP{Lo que viene a continuación sobre el MRA no coincide con lo visto en clase. Al final pongo alternativa, para adaptar}

%\NP{Modifique la sección, saque todo lo referente a la transformada continua, modifique la estructura para que el marco teórico incluya Haar, DWT  y MRA  como el resultado de la construcción, adaptando el contenido que me pasaste. En metodología deje MODWT y lo que es bien especifico del trabajo}

\subsection{Haar Transform}

The Haar \textit{wavelet} is characterised by a stepwise structure. It is constructed from two fundamental vectors: the mother \textit{wavelet} and the scaling vector \cite{bookMPR2004,walker2008primer}.

The mother \textit{wavelet} represents the high-frequency details or rapid fluctuations of the signal.

\begin{definition}[Haar mother wavelet]
\label{def:wavelet-madre-haar}

\[
\psi=\left(\tfrac{1}{\sqrt{2}},-\tfrac{1}{\sqrt{2}}\right)
\]
\end{definition}

This vector has zero sum,
\[
\sum_i\psi_i=\tfrac{1}{\sqrt{2}}-\tfrac{1}{\sqrt{2}}=0,
\]
and unit energy,
\[
\lVert\psi\rVert^2=\tfrac12+\tfrac12=1
\]

The zero-sum property ensures that \(\psi\) ignores the mean value of the signal and captures only its changes, while unit energy normalises the magnitude of the coefficients.

The scaling vector, in turn, represents the approximation of the signal.

\begin{definition}[Haar scaling vector]
\label{def:wavelet-padre-haar}

\[
\phi=\left(\tfrac{1}{\sqrt{2}},\tfrac{1}{\sqrt{2}}\right)
\]
\end{definition}

This scaling vector also has unit energy and is strictly orthogonal to the mother \textit{wavelet} (\(\phi\cdot\psi=0\)). When applied to a pair of consecutive observations \((f_{2k-1},f_{2k})\), the vector \(\phi\) calculates their normalised average, while \(\psi\) yields their normalised difference. Together, they form the elementary unit of the subsequent construction: the functions at the first level of the discrete \textit{wavelet} transform (DWT) are obtained by translating \(\phi\) and \(\psi\) along the signal, while higher levels dilate them by successively doubling their temporal support.

%% TODO-NP: el segundo párrafo de la sección («This section develops that framework in three stages…») y la última frase de 3.1 («Together, they form the elementary unit of the subsequent construction…») mencionan la DWT, que ya no está: reescribir las dos.

\subsection{Multiresolution Analysis (MRA)}

%% TODO-NP: esta frase remitía a la DWT que ya no está; reescribir el arranque (p. ej. que la aplicación recursiva de los vectores de Haar hasta el nivel J da la descomposición siguiente, citando \cite{bookMPR2004,walker2008primer}).
The recursive application of this transform leads to the fundamental decomposition of multiresolution analysis: at level \(J\) (with \(J\leq K\)), every signal is expressed as
\[
f = A_J + D_J + D_{J-1} + \cdots + D_1,
\]

Here, \(A_J\) represents the approximation at level \(J\) (the trend that remains after \(J\) successive averaging operations), and each \(D_j\) is the detail component at level \(j\) (the fluctuation lost when moving from resolution level \(j-1\) to level \(j\)).

%% TODO-NP: el párrafo siguiente usa los coeficientes a_{J,k} y d_{j,k}, que se definían en la DWT recortada; o agregar media frase que los presente, o dejar solo la frase final sobre Parseval sin la fórmula.
The orthonormality of the Haar systems has a direct consequence that anticipates the central role of energy in this study: The total energy of the signal is preserved, although its distribution across the different scales yields a phenomenon of compactification of energy, since usually the approximation coefficients are much greater than the detail coefficients:
\[
\sum_{n=1}^{N} f_n^2 = \sum_{k=1}^{N/2^J} a_{J,k}^2 + \sum_{j=1}^{J} \sum_{k=1}^{N/2^j} d_{j,k}^2.
\]
The first term on the right-hand side represents the energy of the  approximation sub-signal \(a_J\), and each term in the double sum corresponds to the energy of the detail components \(d_j\) at level \(j\). This identity, the discrete equivalent of Parseval's theorem \cite{bookMPR2004,walker2008primer}, provides the formal basis for analysing how much variability is concentrated at each temporal scale.

% --------------
% --------------
% --------------
%%-------------------------
\section{Methodology}

\subsection{Maximum Overlap Discrete Wavelet Transform (MODWT)}

For the MODWT we directly consider general length \(N\), and we will see that it retains the same number of coefficients at each decomposition scale, in contrast with the DWT. This property makes it translation-invariant and produces coefficients that are temporally aligned with the original series \cite{percival2000}. This temporal alignment motivates its selection for the analysis of the signals under study.

To compensate for the energy redundancy caused by the absence of subsampling, the Haar filters in the MODWT are normalised by dividing the corresponding conventional DWT filters by an additional factor of \(\sqrt{2}\). This normalisation yields the scaling and detail filters defined below.

\begin{definition}[Scaling filter]
\label{def:haar-modwt-scaling-filter}
For the Haar MODWT, the filter coefficients are
\[
h=
\left(
\frac{1}{2},
\frac{1}{2}
\right)
\]
\end{definition}

\begin{definition}[Detail filter]
\label{def:haar-modwt-detail-filter}
For the Haar MODWT, the filter coefficients are
\[
g=
\left(
\frac{1}{2},
-\frac{1}{2}
\right)
\]
\end{definition}

\subsubsection{Approximation and Detail Coefficients}

For each scale \(j\geq 1\), both coefficients are calculated at each time index \(n\) by means of circular convolution. In this process, the corresponding filters are progressively dilated by inserting intermediate zeros with a dyadic spacing of \(2^{j-1}\), following the \textit{à trous} algorithmic structure \cite{holschneider1989}.

The initial condition and periodic indexing convention are incorporated into the following definitions.

\begin{definition}[Approximation coefficient]
\label{def:haar-modwt-approximation-coefficient}
Let
\[
A_{0,n}=f_n,
\qquad n=1,\ldots,N,
\]
and interpret the indices periodically modulo \(N\). For every \(j\geq 1\) and \(n\in\{1,\ldots,N\}\), define
\[
A_{j,n}
=
\frac{
A_{j-1,n}
+
A_{j-1,n-2^{j-1}}
}{2}
\]
\end{definition}

\begin{definition}[Detail coefficient]
\label{def:haar-modwt-detail-coefficient}
Let \(A_{j-1,n}\) denote the approximation coefficients from Definition~\ref{def:haar-modwt-approximation-coefficient}, with indices interpreted periodically modulo \(N\). For every \(j\geq 1\) and \(n\in\{1,\ldots,N\}\), define
\[
D_{j,n}
=
\frac{
A_{j-1,n}
-
A_{j-1,n-2^{j-1}}
}{2}
\]
\end{definition}

Both sets of coefficients have an intuitive arithmetic interpretation. The approximation coefficients \(A_{j,n}\) represent the overall structure, trend, and low-frequency components of the signal, whereas the detail coefficients \(D_{j,n}\) capture rapid fluctuations, noise, and high-frequency components.

As a result, the coefficients remain temporally aligned with each observation in the series under study. To adapt the transform to the objectives of this article, the following section introduces a strictly causal version in which the filtering windows extend only backwards in time rather than relying on periodic indexing.

\subsubsection{Energy Conservation}

With periodic indexing and the stated Haar normalisation, the non-decimated transform has an exact global energy partition. This statement concerns the complete periodic sequence, not arbitrary truncated windows of the causal implementation.

\begin{proposition}
\label{prop:conservacion-energia-modwt}
Let \(f\in\mathbb{R}^N\), use periodic indexing modulo \(N\), and define \(A_{j,n}\) and \(D_{j,n}\) by Definitions 4.3--4.4. For every integer \(J\geq1\), these periodic Haar coefficients satisfy
\[
\sum_{n=1}^{N} f_n^2
=
\sum_{j=1}^{J}
\left(
\sum_{n=1}^{N} D_{j,n}^2
\right)
+
\sum_{n=1}^{N} A_{J,n}^2
\]
\end{proposition}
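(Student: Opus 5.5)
The plan is to prove a one-step identity and then telescope over the levels. The one-step claim is that for every \(j\geq1\),
\[
\sum_{n=1}^{N} A_{j-1,n}^2=\sum_{n=1}^{N} A_{j,n}^2+\sum_{n=1}^{N} D_{j,n}^2 .
\]
Summing this over \(j=1,\ldots,J\), the intermediate approximation energies cancel. Since \(A_{0,n}=f_n\), this leaves exactly the identity in Proposition~\ref{prop:conservacion-energia-modwt}.

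For the one-step identity, fix \(j\) and write \(a_n=A_{j-1,n}\) and \(s=2^{j-1}\). Definitions~\ref{def:haar-modwt-approximation-coefficient} and \ref{def:haar-modwt-detail-coefficient} give, pointwise in \(n\),
\[
A_{j,n}^2+D_{j,n}^2=\frac{(a_n+a_{n-s})^2+(a_n-a_{n-s})^2}{4}=\frac{a_n^2+a_{n-s}^2}{2}.
\]
This is the parallelogram law: the cross terms \(\pm 2a_na_{n-s}\) cancel. Summing over \(n=1,\ldots,N\) gives \(\tfrac12\sum_n a_n^2+\tfrac12\sum_n a_{n-s}^2\).

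The only step needing care is the second sum. Because indices are read modulo \(N\), the map \(n\mapsto n-s \pmod N\) is a bijection of \(\{1,\ldots,N\}\) onto itself. Hence \(\sum_n a_{n-s}^2=\sum_n a_n^2\), which completes the one-step identity. This holds for every shift \(s\), including \(s\geq N\), where \(s\) simply reduces modulo \(N\). So no restriction such as \(2^J\leq N\) or divisibility of \(N\) is needed, and the statement holds for every \(J\geq1\).

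This re-indexing is the main obstacle, and it is also exactly why the proposition is restricted to periodic indexing. For a causal or truncated window, the shifted sum would lose boundary terms and the identity would fail. I would add a remark to that effect, pointing back to the sentence before the proposition.
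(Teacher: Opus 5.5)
Your proof is correct and follows the paper's argument. The paper writes $A_j=\tfrac12(I+S_h)A_{j-1}$ and $D_j=\tfrac12(I-S_h)A_{j-1}$, with $S_h$ the circular shift by $h=2^{j-1}$, and uses the isometry of $S_h$ (your re-indexing bijection) together with the parallelogram identity to get $\|A_j\|_2^2+\|D_j\|_2^2=\|A_{j-1}\|_2^2$, then sums over levels; your pointwise version of that identity and your observation that no condition such as $2^J\le N$ is needed are both accurate.
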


This is the periodic MODWT energy identity \cite{percival2000}. Indeed, writing \(S_h\) for circular translation by \(h=2^{j-1}\), one has
\[
 A_j=\tfrac12(I+S_h)A_{j-1},\qquad
 D_j=\tfrac12(I-S_h)A_{j-1}.
\]
Since \(S_h\) is an isometry, the parallelogram identity gives
\(\|A_j\|_2^2+\|D_j\|_2^2=\|A_{j-1}\|_2^2\); summing over levels proves the proposition. This does \emph{not} assert mutual orthogonality of all reconstructed components. For a finite causal series without periodic extension, the equality cannot automatically be applied to a truncated observation interval because boundary terms and missing initial coefficients intervene. Consequently, local band energies below are descriptive squared-coefficient measures, not an exact additive partition of local signal variance.

%----------------------------

\subsection{Haar \textit{à trous} operator}

Using the theory and definitions introduced in the previous section, we adopt a single Haar \textit{à trous} operator \cite{renaud2003prediction}, whose formal definition is presented below.

\begin{definition}[Haar \textit{à trous} operator]
\label{def:operador-haar-a-trous}
Let \(x(t)\) be a discrete time series, and let \(J\geq 1\) be the decomposition depth. For each \(j=1,\ldots,J\), define
\[
A_j(t)
=
\frac{1}{2^j}
\sum_{k=0}^{2^j-1}x(t-k),
\qquad
D_j(t)
=
A_{j-1}(t)-A_j(t),
\]
with the initial condition
\[
A_0(t)=x(t)
\]

\end{definition}

The function \(A_j(t)\) represents the approximation at scale \(j\), defined as the average of the most recent \(2^j\) observations, whereas \(D_j(t)\) represents the detail at scale \(j\), determined by the difference between two consecutive approximations.

This operator is characterised by two fundamental analytical properties. First, the original series can be recovered exactly from its decomposition, as formalised below:

\begin{proposition}[Exact reconstruction]
\label{prop:reconstruccion-exacta}
For every series \(x\) and every decomposition depth \(J\geq 1\), the following identity holds at every time  \(t\geq 2^J\):
\[
x(t) = A_J(t) + \sum_{j=1}^{J} D_j(t)
\]
\end{proposition}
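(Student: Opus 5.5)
The identity follows from a telescoping sum. It only needs the definition $D_j(t)=A_{j-1}(t)-A_j(t)$ from Definition~\ref{def:operador-haar-a-trous}, together with the initial condition $A_0(t)=x(t)$. The plan is first to fix $t\geq 2^J$ and check that every quantity in the identity is well defined from observed data. Then I will sum the details over $j$ and collapse the sum.

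\emph{Well-definedness.} For each $j\le J$, $A_j(t)$ is the average of $x(t),x(t-1),\ldots,x(t-2^j+1)$. The oldest index appearing is $t-2^J+1\geq 1$ whenever $t\geq 2^J$. Hence every $A_j(t)$ with $0\le j\le J$ is computed from genuine observations, with no periodic extension or padding. This is exactly why the condition $t\geq 2^J$ appears in Proposition~\ref{prop:reconstruccion-exacta}. It also contrasts with the periodic setting of Proposition~\ref{prop:conservacion-energia-modwt}.

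\emph{Telescoping.} Substituting the definition of the details gives
\[
\sum_{j=1}^{J} D_j(t)=\sum_{j=1}^{J}\bigl(A_{j-1}(t)-A_j(t)\bigr)=A_0(t)-A_J(t).
\]
Adding $A_J(t)$ to both sides yields $A_J(t)+\sum_{j=1}^J D_j(t)=A_0(t)=x(t)$, which is the claimed identity. If one prefers induction on $J$, the case $J=1$ is immediate: $A_1(t)+D_1(t)=A_0(t)$. The step from $J-1$ to $J$ replaces $A_{J-1}(t)$ by $A_J(t)+D_J(t)$.

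There is no real obstacle. The only point requiring care is the first step: confirming that the index range $t\geq 2^J$ makes every averaging window lie inside the observed sample, so that the identity holds literally and not only under some boundary convention. It is also worth noting that the identity is pointwise and purely algebraic. It holds for every series $x$, uses no property of the Haar weights beyond the definition of $D_j$, and says nothing about orthogonality or energy of the components.
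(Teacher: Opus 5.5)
Your proof is correct and follows the paper's own argument: substitute $D_j(t)=A_{j-1}(t)-A_j(t)$, telescope the sum to $A_0(t)-A_J(t)$, and use $A_0(t)=x(t)$. Your preliminary check that $t\geq 2^J$ keeps every averaging window inside the observed sample is a sensible addition that the paper leaves implicit; the telescoping step itself does not depend on it.
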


\begin{proof}
The sum of the detail coefficients reduces algebraically to a telescoping sum:
\[
\begin{aligned}
\sum_{j=1}^{J} D_j(t)
&=
\sum_{j=1}^{J}
\left[
A_{j-1}(t)-A_j(t)
\right]
\\
&=
A_0(t)-A_J(t)
=
x(t)-A_J(t)
\end{aligned}
\]
\end{proof}

%\AP{Aqui también vendría bien algún ejemplo gráfico}

Second, causality is mathematically guaranteed by construction, since the approximation coefficient is defined using only the historical terms \(x(t-k)\) for \(k\geq 0\). Thus, at each time \(t\), the calculation of the coefficients is restricted to the temporal support
\[
\left[t-2^j+1,t\right],
\]
ensuring that the operator processes exclusively the information available up to that time.

To illustrate how this restriction operates when the operator is applied to a finite signal, consider a transient signal previously used as an example in the literature \cite{bookMPR2004}:
\begin{equation}
\label{eq:scale-signal}
f(t)=20t^2(1-t)^4\cos(12\pi t).
\end{equation}
The signal is sampled at \(N=128\) equally spaced points over \([0,1]\).

\begin{figure}[H]
\centering
\includegraphics[width=\columnwidth]{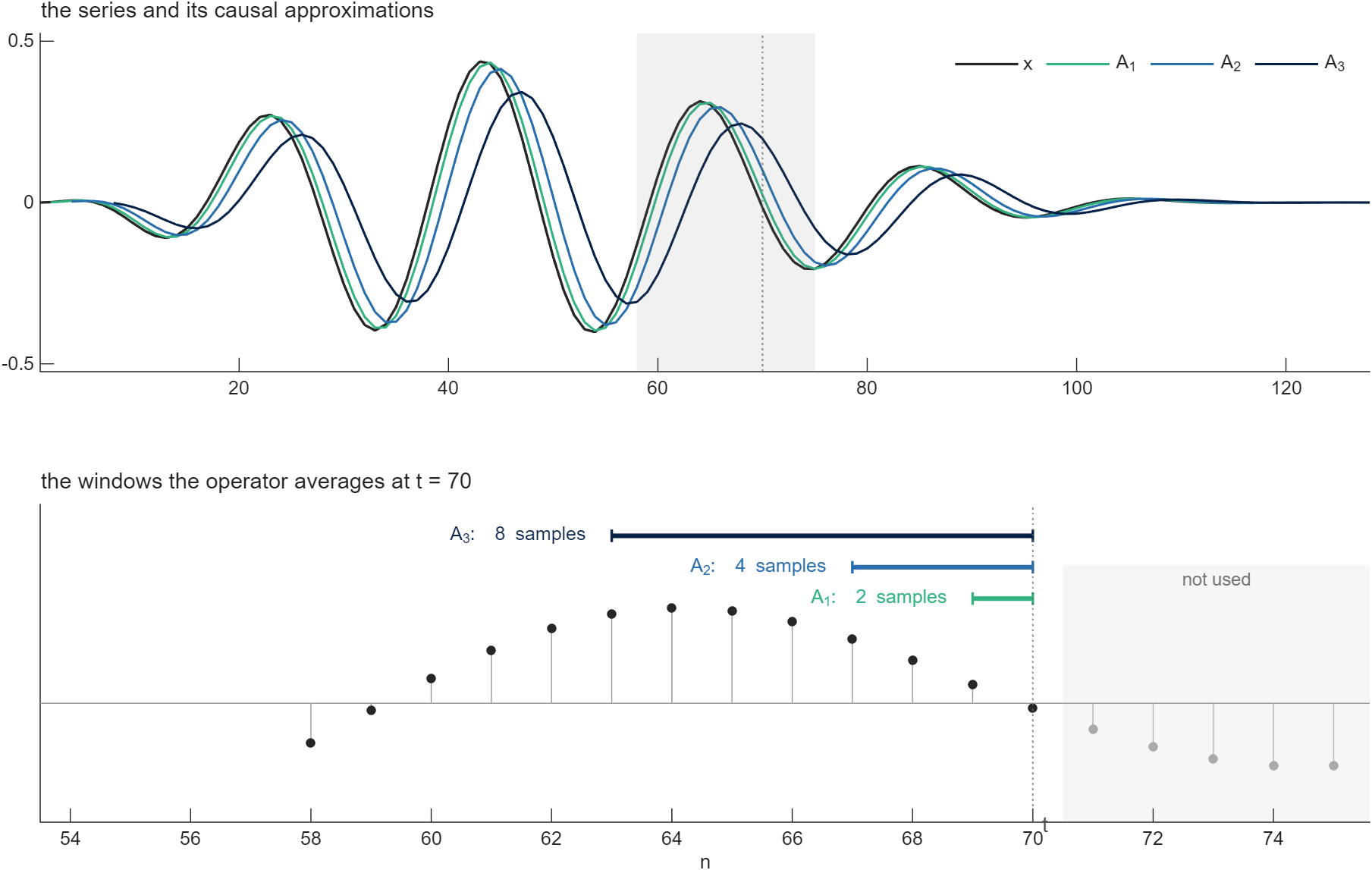}
\caption{Causal \textit{à trous} Haar operator applied to the signal~\eqref{eq:scale-signal}. The upper panel compares the signal \(x\) with the approximations \(A_1\), \(A_2\), and \(A_3\); as the depth increases, so do the smoothing and temporal lag. The lower panel enlarges the shaded region and shows the windows \([t-2^j+1,t]\) used at \(t=70\). All bars end at \(t\), so subsequent observations, shown in grey, are excluded.}
\label{fig:atrous-operator}
\end{figure}

\subsubsection{Depth and Scales}

In connection with the causal structure, the definition of the scales \(j\) and the depth \(J\) determines the temporal resolution of the analysis and the magnitude of the initial boundary effect. Away from the initial boundary, the causal coefficients agree with the corresponding past-looking periodic Haar MODWT coefficients whenever the full historical support is available. Each scale is therefore associated with a nominal frequency band \cite{percival2000}.

In this sense, the detail \(D_j\) is primarily associated with nominal periods from \(2^{j}\) to \(2^{j+1}\) sampling intervals, whereas \(A_J\) emphasises slower dynamics. These are approximate octave labels rather than disjoint spectral supports: Haar filter frequency responses overlap and have non-zero leakage outside their nominal bands. Therefore, translating these results into specific time units depends on the sampling grid of the series, an aspect that will be detailed when formalising the model variables.

With regard to the depth \(J\), it determines an analytical trade-off: each additional scale isolates a lower-frequency octave but doubles the size of the backward-looking filtering window, thereby further reducing the number of available observations. Therefore, the criterion consists of selecting the smallest value of \(J\) that ensures that the final approximation \(A_J\) retains only the long-term trend component without absorbing scale-specific fluctuations, a choice that will be justified in the results section.

\subsection{Band Energies}

While the energy-conservation proposition distributes the total energy of the signal among the different scales, analysing its temporal evolution requires this metric to be localised in time. For this purpose, the energy measure is defined over a moving window, which is consistent with the causality of the operator.

\begin{definition}[Local scale and band energy]
\label{def:energia-local-escala-banda}
Let \(x\) be the series, let \(j\) be the scale, and let \(D_jx\) be the detail sequence corresponding to that scale. For \(w\in\mathbb{N}^{+}\), the causal window of width \(w\) associated with time \(t\) is defined as
\[
I_t=\{t-w+1,\ldots,t\}
\]

The local energy at scale \(j\) and time \(t\) is defined as the squared \(\ell^2\)-norm of the detail sequence restricted to \(I_t\):
\[
E_j(t)
=
\sum_{s\in I_t}(D_jx)_s^2
=
\left\|
\left.(D_jx)\right|_{I_t}
\right\|_{\ell^2}^{2}
\]

Let \(B\) be a band formed by a set of adjacent scales. The energy of band \(B\) at time \(t\) is defined as
\[
E_B(t)
=
\sum_{j\in B}E_j(t)
\]
\end{definition}

Dividing the local energy \(E_j(t)\) by the moving-window width \(w\) yields the quantity \(\frac{1}{w}E_j(t)\), which constitutes the sample estimator of the \textit{wavelet} variance at scale \(j\) \cite{percival2000}. The resulting local squared-coefficient statistic is related to wavelet-variance estimation, but its finite-window bias, dependence and boundary effects require separate consideration; properties established for global estimators do not transfer automatically.

%\AP{Añadir ejemplo ilustrativo}
%\NP{Añadido}

Figure~\ref{fig:haar-3} illustrates this decomposition using the signal defined in equation~\eqref{eq:scale-signal}, decomposed to a depth of \(J=3\).

\begin{figure}[htbp]
\centering
\includegraphics[width=\columnwidth]{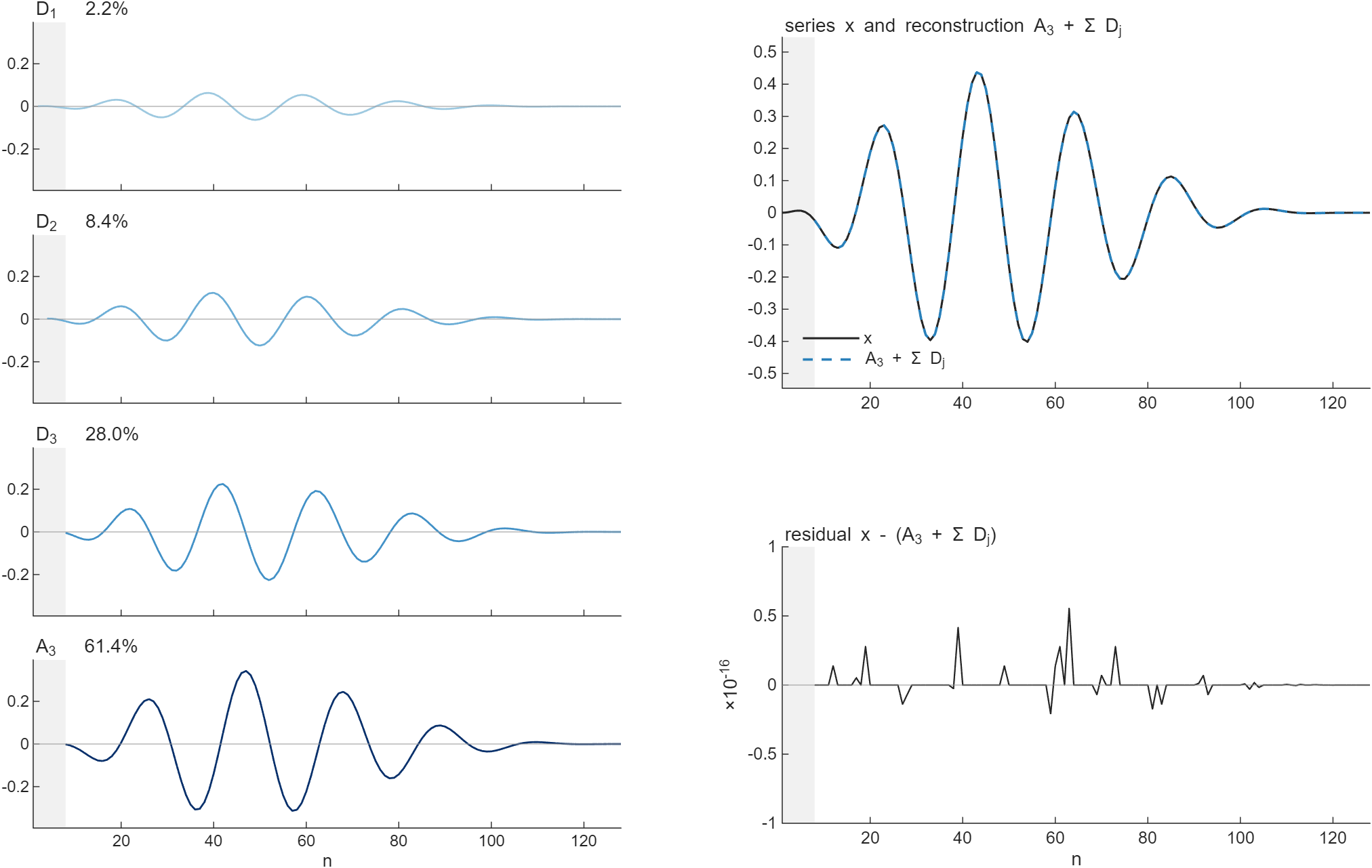}
\caption{Causal \textit{à trous} Haar decomposition of the signal~\eqref{eq:scale-signal} with depth \(J=3\). The left panel shows the detail components \(D_1\), \(D_2\), and \(D_3\), together with the approximation \(A_3\), all temporally aligned with the original signal. The right panel compares the signal with its reconstruction \(A_3+\sum_{j=1}^{3}D_j\). The shaded region marks the initial interval \(t<2^J\), where \(2^J=8\), in which \(A_3\) is not yet defined.}
\label{fig:haar-3}
\end{figure}

\subsection{Scale-wise Correlation}

Local energy examines one signal at a time; however, our study requires tools for analysing two signals. For this purpose, wavelet correlation is the instrument used to compare two series across scales. It measures their synchronised movement and its direction by directly relating their detail coefficients scale by scale \cite{whitcher2000wavelet}.

However, the research question addressed in this study is different, as we seek to determine whether the intensity of the variability of one series accompanies that of another at each scale, regardless of its direction. For this purpose, the appropriate tool is the correlation between the previously defined band energies, calculated over a historical window.

\begin{definition}[Band-energy correlation]
\label{def:correlacion-energias-banda}
Let \(x\) and \(y\) be two time series defined on the same sampling grid, let \(B\) be a band of scales, and let
\(
I_t=\{t-W+1,\ldots,t\}
\)
be the causal window of width \(W\in\mathbb{N}^{+}\). The band-energy correlation at time \(t\) is defined as
\[
\rho_B(t)
=
\operatorname{corr}_{\mathrm{S}}
\left\{
\left(E_B^x(s),E_B^y(s)\right)
:
s\in I_t
\right\}
\]

Here, \(\operatorname{corr}_{\mathrm{S}}\) denotes Spearman's correlation.
\end{definition}

%\AP{Eso que ``The width \(W\) of this correlation window is independent of the width \(w\)'' no se acaba de entender muy bien}
%\NP{Añadi el parrafo siguiente para explicarlo bien}

This construction uses two time windows with distinct roles. The inner window, of width \(w\), defines how much recent history each energy value \(E_B^{x}(s)\) captures. The outer window, of width \(W\), determines how many of these values are used to calculate Spearman's correlation at time \(t\). The two parameters are chosen separately: \(w\) controls the degree of smoothing of the energy, while \(W\) determines the sample size of the correlation estimator. Because consecutive inner and outer windows overlap, the resulting correlation series is serially dependent. Inference on event-time differences should therefore use uncertainty estimates that preserve this dependence (for example, a block bootstrap clustered by pool and turbulence episode); a numerical difference between correlations alone is not a significance test.

A rank-based metric is selected because the local energies are, by construction, non-negative and strongly skewed. Given this structure, Spearman's correlation \cite{spearman1904proof} ensures that extreme values enter only through their ordinal position rather than their absolute magnitude; therefore, their influence on the estimator remains bounded. Furthermore, this property makes the metric invariant under strictly monotonic transformations of the series and allows it to capture any monotonic relationship between them.

%------------------------------------
\subsection{Regimes: The Markov Model}

To study how scale-dependent dynamics are restructured when the market changes state, an operational definition of that state is required. This definition must be external to the \textit{wavelet} formulation, since identifying the episodes using the same instruments that will subsequently be employed to analyse them would introduce methodological circularity. Thus, the regime model delimits periods of analysis whose construction does not involve any multiresolution tool.

\begin{definition}[Markov regime-switching model]
\label{def:modelo-regimen-markov}
Let \(P_d\) be the closing price of the pool on day \(d\). The daily logarithmic return is defined as
\[
r_d
=
\log(P_d)-\log(P_{d-1}),
\]
and its standardised version as
\[
z_d
=
\frac{r_d-\overline{r}}{s_r},
\]
where \(\overline{r}\) and \(s_r\) represent the mean and standard deviation used to standardise daily logarithmic returns. Their estimation window must be specified when interpreting the labels as real-time outputs.

%\AP{Arriba es ``daily return series'' o ``daily logarithmic return series''?}
%\NP{Corregido}

Let \(S_d\in\{0,1\}\) be a latent variable that follows a first-order Markov chain, whose transition probabilities are defined as
\[
p_{ij}
=
\mathbb{P}
\left(
S_d=j
\mid
S_{d-1}=i
\right),
\qquad
i,j\in\{0,1\}.
\]
Conditional on the latent state, the standardised series satisfies
\[
z_d\mid S_d=i
\sim
\mathcal{N}
\left(
\mu,\sigma_i^2
\right),
\qquad
i\in\{0,1\}.
\]
\end{definition}

The parameters \(\mu\), \(\sigma_0^2\), \(\sigma_1^2\), \(p_{00}\), and \(p_{11}\) are estimated by maximum likelihood. Once the model has been estimated, the turbulent state is identified as the state with the largest variance:
\[
i_{\mathrm{turb}}
=
\underset{i\in\{0,1\}}{\arg\max}\,
\sigma_i^2.
\]

Once the parameters have been estimated, the Hamilton filter calculates, for each day, the filtered probability of belonging to each state \cite{hamilton1989new}. This calculation uses only the observations available up to that time:
\[
\mathbb{P}
\left(
S_d=i
\mid
\mathcal{F}_d
\right),
\qquad
i\in\{0,1\},
\]
where \(\mathcal{F}_d\) contains the observations available up to day \(d\). Operationally, a day is classified as turbulent when
\[
\mathbb{P}
\left(
S_d=i_{\mathrm{turb}}
\mid
\mathcal{F}_d
\right)
>
0.5.
\]

The Hamilton recursion filters the state conditional on the chosen model parameters. A filtered probability at day \(d\) is strictly real-time only if the standardisation constants and model parameters have themselves been estimated from observations available by day \(d\) (for example, by an expanding-window or rolling-window fit). If instead they are fitted once using the complete sample, the labels are retrospective, despite the use of filtered rather than smoothed probabilities. The present event study should be interpreted as retrospective unless such historical re-estimation is verified. In either case, the regime model does not use the wavelet decomposition.

This segmentation establishes the temporal framework within which the redistribution of energy across scales will be analysed. Specifically, transitions to the turbulent regime will serve as a reference for examining the behaviour of the signals around each change of state. Finally, in this study, the model is used exclusively to determine the starting dates of turbulent episodes.

\subsection{Fluctuation-to-trend indicator (\(H_i^c\))}

While band energies quantify the variability of a series at each scale, a second interpretation of the decomposition consists of relating the accumulated fluctuation of the details to the level of the trend at the corresponding resolution.

\begin{definition}[Causal indicator \(H_i^c\)]
\label{def:indicador-causal-hc}
Let \(x\) be a strictly positive time series in levels and \(i\in\mathbb{N}^{+}\) a decomposition depth, with at least \(2^i\) historical observations available. For each time \(t\), define
\begin{equation}
H_i^c(t)
=
\frac{\displaystyle\sum_{j=1}^{i}\left|D_j(t)\right|}
{\left|\widetilde{A}_i(t)\right|},
\label{eq:indicador-causal-hc}
\end{equation}
where \(D_j(t)\) are the detail coefficients of the decomposition and \(\widetilde{A}_i(t)\) is the median of the \(2^i\) most recent observations, \(t\) included, which replaces the approximation \(A_i(t)\) as an estimate of the local reference level. Strict positivity ensures that the denominator is non-zero.
\end{definition}

%\AP{Poner un ejemplo}
%\NP{Añadido}

The calculation of the indicator can be followed step by step over a short segment of the series. For \(i=2\), suppose that the four most recent observations, including the observation at time \(t\), are \((100,102,106,112)\). The indicator is obtained as follows:

\begin{enumerate}
\item Select the \(2^i=4\) most recent observations, which constitute the support of the operator at this depth.

\item Calculate the causal approximations: \(A_1(t)=109\), corresponding to the mean of the last two observations, and \(A_2(t)=105\), corresponding to the mean of all four observations.

\item Obtain the detail coefficients as differences between consecutive approximations: \(D_1(t)=x(t)-A_1(t)=3\) and \(D_2(t)=A_1(t)-A_2(t)=4\). Therefore, the accumulated fluctuation is \(|D_1(t)|+|D_2(t)|=7\).

\item Divide this value by the median of the four observations, \(\widetilde{A}_2(t)=104\), which yields \(H_2^c(t)=7/104\approx 0.067\).
\end{enumerate}

If the four observations were \((100,101,100,101)\), oscillating around the same level, both approximations would be equal to \(100.5\), and the accumulated fluctuation would be \(0.5\). Consequently, \(H_2^c(t)=0.5/100.5\approx 0.005\), approximately one-thirteenth of the previous value. Using the median rather than the mean in the denominator reduces sensitivity to extreme observations, although it does not eliminate it. The indicator also responds to rapid monotonic trends, as the increasing numerical example illustrates, and is not exclusively a measure of oscillations.

Once \(H_i^c\) has been interpreted, an alarm rule is defined to evaluate the temporal correspondence between the indicator and turbulence onsets. An alarm is recorded on day \(t\) when the indicator crosses above the quantile of order \(q\), calculated over the preceding sixty days. An episode is considered captured if the alarm occurs between its onset day and the following two days; latency corresponds to the number of days elapsed until this first signal. An alarm issued on a calm day is considered false if no episode begins during the following three days, and its frequency is reported in annualized terms.

%------------------------------------

\section{Problem structure}

%\AP{En esta sección también vendría bien poner algún ejemplo}
%\NP{Añadido despues de la definición de Fee}

Unlike the generic time series considered in the previous section, we now present the system under analysis: the liquidity pool of an automated market maker (AMM).

This mechanism---which executes trades against its own reserves according to a deterministic rule---is studied from the perspective of the agent who supports it. An agent who deposits capital in the pool earns fees for each transaction processed but remains exposed to price volatility; therefore, income arises from activity, whereas losses arise from price movements. The research problem addressed in this study consists of understanding how these two flows are related across temporal scales, subject to a causality constraint on the measurements.

\begin{definition}[Constant-product pool]
\label{def:pool-producto-constante}
Let \(x_t>0\) and \(y_t>0\) be the reserves of two assets in a Uniswap V2-type pool. In the absence of fees, a transaction that changes the reserves from \((x_t,y_t)\) to \((x_t+\Delta x,y_t-\Delta y)\) must satisfy the relation
\[
(x_t+\Delta x)(y_t-\Delta y)=x_ty_t
\]

The state of the pool is characterised by the invariant
\[
k_t=x_ty_t,
\]
while its price is defined as the ratio
\[
P_t=\frac{y_t}{x_t},
\]
expressed in units of asset \(y\).

Each transaction also incurs a fee proportional to the traded volume, which accumulates directly in the reserves. Therefore, in the absence of liquidity additions or withdrawals, the invariant \(k_t\) increases with each transaction \cite{adams2020uniswapv2}.

Furthermore, the liquidity provider receives a fixed fraction of this fee: the coefficient \(\mathit{fee}\), a dimensionless constant established by the protocol. Thus, the fees accumulated during interval \(t\) are defined as
\begin{equation}
\label{eq:fee}
\mathrm{Fee}(t)=\mathit{fee}\,V_t,
\end{equation}
where \(V_t\) represents the volume traded during that interval.\footnote{The total pool fee may be greater than the fraction \(\mathit{fee}\) received by the liquidity provider; when this occurs, the difference corresponds to the protocol. The ratio between the two is specified in the data section as the calibration factor \(c\) used in the volume correction.}
\end{definition}

The joint effect of a transaction on the pool price and the liquidity provider's income can be illustrated directly from Definition~\ref{def:pool-producto-constante}. Consider a user who swaps an amount \(\Delta x>0\) of asset \(x\) with the pool for an amount \(\Delta y>0\) of asset \(y\). In the absence of fees, the constant-product condition implies that
\[
\Delta y
=
y_t-\frac{x_ty_t}{x_t+\Delta x}
=
\frac{y_t\Delta x}{x_t+\Delta x}
>0.
\]
After the transaction, the price of asset \(x\), expressed in units of asset \(y\), is
\[
P_{t+1}
=
\frac{y_t-\Delta y}{x_t+\Delta x}
=
P_t
\left(
\frac{x_t}{x_t+\Delta x}
\right)^2
<
P_t.
\]
Therefore, the trade increases the reserve of asset \(x\), reduces the reserve of asset \(y\), and decreases the relative price of the former.

In the absence of fees, if the amount of asset \(x\) exchanged is equal to \(1\,\%\) of its initial reserve, that is, if \(\Delta x=0.01x_t\), then
\[
P_{t+1}
=
\frac{P_t}{(1.01)^2}
\simeq
0.9803P_t,
\]
and therefore the price decreases by approximately \(1.97\,\%\).

The calculation above isolates the effect of the constant-product curve. When fees are incorporated, the volume of this same transaction produces the fee income \(\mathrm{Fee}(t)\) from Equation~\eqref{eq:fee} for the liquidity provider. This fee remains in the reserves. Consequently, in the absence of liquidity deposits or withdrawals, the post-transaction invariant satisfies
\[
k_{t+1}>k_t.
\]

Thus, a transaction simultaneously affects the two central quantities of the problem: the change in the reserves modifies the pool price, whereas the fee compensates the liquidity provider in proportion to the trading volume.

\subsection{Economics and Time Series of the System}

An agent who deposits capital in the pool contributes both assets in the same proportion as the reserves and owns a fraction of the total deposited value (\(\mathrm{TVL}_t\)). The agent's economic incentive involves two opposing forces: income from accumulated fees (\(\mathrm{Fee}(t)\)) in proportion to their share and a cost imposed by arbitrage in response to price movements. At second order, this loss to arbitrage increases with price volatility and is equivalent to \(\sigma^2/8\) of the pool value per unit of time \cite{milionis2022lvr}.

Because arbitrage keeps the pool price (\(P_t\)) aligned with the external price \cite{angeris2021analysis}, the system directly inherits market volatility. Thus, the pool is described by three time series: price \(P_t\) (external forcing), volume \(V_t\) (which reflects activity and generates fees), and \(\mathrm{TVL}_t\) (deposited capital). These three series become coupled during episodes of high volatility, when arbitrage and capital repositioning intensify the use of the pool, causing the provider's income and costs to tend to coincide over time.

\subsection{Pool Return-to-Risk: The \(m\) Indicator}

The tension between the income flow (fees) and risk exposure (volatility) makes it possible to construct a scalar summary at daily frequency, which we denote by \(m\).

\begin{definition}[Fee yield]
\label{def:rendimiento-comisiones}
For each day \(d\), the fee yield per unit of deposited capital is defined as
\[
P_s(d)
=
\frac{\mathrm{Fee}(d)}{\mathrm{TVL}_d}
=
\mathit{fee}\,\frac{V_d}{\mathrm{TVL}_d},
\]
where
\[
\mathrm{Fee}(d)=\mathit{fee}\,V_d
\]
Here, \(V_d\) represents the daily traded volume, \(\mathit{fee}\) is the provider fee coefficient, and \(\mathrm{TVL}_d\) is the deposited capital.
\end{definition}

\begin{definition}[Realised variance]
\label{def:varianza-realizada}
For each day \(d\), the realised variance is defined as
\[
RV_d^2
=
\sum_{t\in d} r_t^2,
\]
where \(r_t\) represents the intraday logarithmic return of the pool price.
\end{definition}

\begin{definition}[\(m\) indicator]
\label{def:indicador-m}
For each day \(d\), the \(m\) indicator is defined as
\[
m(d)
=
\frac{P_s(d)}{RV_d^2}
\]
\end{definition}

This quantity represents the daily fee yield relative to its dominant risk. Operationally, except for the constant \(\mathit{fee}\), it is equivalent to dividing capital turnover, \(V_d/\mathrm{TVL}_d\), by realised variance. Since fees are not observed directly, they are inferred from this identity using the two measured series.

This ratio does not incorporate any prior smoothing: each day has its own value, and any temporal scale present in the analysis arises exclusively from the multiresolution decomposition. Within this framework, the approximations \(A_j(d)\) represent the smoothed versions of \(m(d)\) at each scale, whose exact reconstruction is given by
\[
m(d)
=
A_J(d)
+
\sum_{j=1}^{J}D_j(d)
\]

Furthermore, since the daily loss to arbitrage is equivalent to \(RV_d^2/8\) per unit of pool value, the coverage condition is
\[
P_s(d)\geq\frac{RV_d^2}{8}
\quad\Longleftrightarrow\quad
m(d)\geq\frac{1}{8}.
\]

Here, \(m=1/8\) represents the point at which the cost of arbitrage is covered. Since the theoretical rate \(\sigma^2/8\) assumes a frictionless setting and constitutes an upper bound on the actual cost, the condition \(m(d)\geq 1/8\) is sufficient to ensure such coverage. Under this framework, the operational threshold \(m(d)>0.4\) used in the analysis is approximately three times this reference level.

\subsection{Problem Formulation}

Up to this point, the calculated variables are aggregated: the overall correlation or the daily value of \(m\) reduces dynamics from different temporal scales to a single observation. By averaging the overall behaviour, these aggregates may conceal relationships within the system if they operate only at specific scales.

For this reason, three lines of analysis are formulated from the system developed above using the tools introduced in the previous section. The first two break down the central problem: they analyse the relationship between activity and volatility across temporal scales and decode the information carried by the scale components of the indicator that summarises them. The third evaluates the causality constraint adopted to determine, wherever a comparison is possible, whether it is feasible to require the measurement to depend exclusively on the available historical information.

\subsubsection{Scale Structure of the Coupling}

To address this aspect, \(\rho_B(t)\) is defined as the band-energy correlation between volume and the absolute value of the logarithmic price return over a causal window. The analysis seeks to determine whether the coupling between activity and volatility is concentrated at specific scales and whether this relationship is reorganised around transitions to the turbulent regime.

\subsubsection{Scale Composition of \(m\)}

Considering the causal decomposition of the \(m\) indicator introduced previously, a high value of \(m\) indicates that the pool generates returns efficiently per unit of risk; however, it remains to be determined whether the temporal scale from which that value originates is decisive. Specifically, two questions are posed:

\begin{enumerate}
    \item[(i)] When \(m\) crosses its operational threshold, which scale components distinguish persistent signals from short-lived ones, and can this persistence be identified from their scale of origin?

    \item[(ii)] Does the composition of \(m\) contain information about return relative to risk on subsequent days that is not contained in the aggregate level of the indicator?
\end{enumerate}

To answer the second question, it is necessary to define the performance over the following \(k\) days. For a given day \(d\) and a horizon of \(k\) days, this performance is defined as
\[
m_f(d,k)
=
\frac{\sum_{u=d+1}^{d+k}P_s(u)}{\sum_{u=d+1}^{d+k}RV_u^{2}}.
\]
That is, \(m_f(d,k)\) is the sum of the fee returns over the interval divided by the total accumulated variance. This calculation is equivalent to averaging the daily values of \(m\), weighting each day by the risk actually borne by the pool.

\subsubsection{Feasibility of the Causal Construction}

Under the requirement of operating exclusively with past information, the effectiveness of the proposed multiresolution indicator is evaluated. Specifically, when detecting transitions in the system, the analysis examines whether the performance of this metric can match the capability of a simple benchmark estimator under the same causal protocol.

%------------------------------------
\section{Data description}

\subsection{Time Series and Pool Panel}

The system described in the previous section is analysed using a panel of 18 Uniswap V2-type pools, sampled every five minutes over two years (from 31 March 2024 to 30 March 2026; three of these pools have only partial histories). The panel, as shown in Table~\ref{tab:pools-panel}, comprises ten stablecoin/volatile pools, for which the pool price is equivalent to the dollar price of the token, and eight volatile/volatile pools, for which the price represents the exchange ratio between the two tokens\footnote{For volatile/volatile pairs, WETH is adopted as the reference asset; thus, the pool price is consistently expressed in units of the second token per unit of WETH.}. Based on events recorded directly on the blockchain\footnote{Swap data are obtained from the public event logs of the corresponding networks through customised queries in Dune Analytics (\url{https://dune.com}).}, the three time series required for the study are constructed: price (calculated as the ratio of the pool reserves), trading volume in each interval, and total capital deposited (\textit{Total Value Locked}, or TVL). For the low-frequency analysis, these series are aggregated at the end of each day.

\begin{table}[!htbp]
\centering
\caption{Liquidity pools included in the panel}
\label{tab:pools-panel}
\resizebox{\columnwidth}{!}{%
\begin{tabular}{llllrl}
\hline
Pair & Type & DEX & Chain & LP fee & Days of history \\
\hline
WETH/USDC   & S/V & Uniswap     & Ethereum & 0.25\% & 730 \\
WETH/USDT   & S/V & Uniswap     & Ethereum & 0.25\% & 730 \\
DAI/WETH    & S/V & Uniswap     & Ethereum & 0.25\% & 730 \\
USDS/SKY    & S/V & Uniswap     & Ethereum & 0.25\% & 556 (from 2024-09-21) \\
WETH/USDC   & S/V & Uniswap     & Base     & 0.25\% & 730 \\
WBNB/USDT   & S/V & PancakeSwap & BNB      & 0.17\% & 730 \\
ETH/USDT    & S/V & PancakeSwap & BNB      & 0.17\% & 730 \\
CAKE/USDT   & S/V & PancakeSwap & BNB      & 0.17\% & 730 \\
ARK/USDT    & S/V & PancakeSwap & BNB      & 0.17\% & 217 (from 2025-08-26) \\
WPOL/USDC   & S/V & Quickswap   & Polygon  & 0.30\% & 730 \\
WBTC/WETH   & V/V & Uniswap     & Ethereum & 0.25\% & 730 \\
MKR/WETH    & V/V & Uniswap     & Ethereum & 0.25\% & 730 \\
UNI/WETH    & V/V & Uniswap     & Ethereum & 0.25\% & 730 \\
PEPE/WETH   & V/V & Uniswap     & Ethereum & 0.25\% & 730 \\
FLOKI/WETH  & V/V & Uniswap     & Ethereum & 0.25\% & 730 \\
BANANA/WETH & V/V & Uniswap     & Ethereum & 0.25\% & 730 \\
SPX/WETH    & V/V & Uniswap     & Ethereum & 0.25\% & 730 \\
NEIRO/WETH  & V/V & Uniswap     & Ethereum & 0.25\% & 612 (from 2024-07-27) \\
\hline
\end{tabular}%
}

\medskip

\parbox{\columnwidth}{%
\footnotesize
\textit{Note:} The full period extends from 31 March 2024 to 30 March 2026, corresponding to 730 days and \(210\,240\) five-minute intervals. S/V denotes stablecoin/volatile pools, and V/V denotes volatile/volatile pools. The LP fee corresponds to the fraction of the fee received by the liquidity provider.
}
\end{table}

Coupling and detection are evaluated through an event study around turbulence onsets, which requires recurrent episodes that are sufficiently separated in time. This condition is met by seven pools.\footnote{The selected pools are \textsc{wbnb/usdt}, \textsc{eth/usdt}, \textsc{cake/usdt}, \textsc{wpol/usdc}, \textsc{uni/weth}, \textsc{pepe/weth}, and \textsc{neiro/weth}.} For the remaining analyses, the sample is adjusted to the requirements of each metric: the scale composition of \(m\) excludes \textsc{ark/usdt}, whose lack of intraday transactions distorts its realised variance, and uses 17 pools, whereas the persistence study retains the full sample of 18.

\subsection{Scales in Time Units}

Given a five-minute sampling grid for price and a daily sampling grid for the indicator, the methodological scale rule translates into specific time intervals, as shown in Table~\ref{tab:escalas-temporales}.

\begin{table}[H]
\centering
\caption{Correspondence between scales and time units}
\label{tab:escalas-temporales}
\resizebox{\columnwidth}{!}{%
\begin{tabular}{lll}
\hline
Level & Daily series & Five-minute price \\
\hline
\(D_1\) & 2--4 days          & 10--20 minutes \\
\(D_2\) & 4--8 days          & 20--40 minutes \\
\(D_3\) & 8--16 days         & 40--80 minutes \\
\(D_4\) & 16--32 days        & 1.3--2.7 hours \\
\(D_5\) & 32--64 days        & 2.7--5.3 hours \\
\(D_6\) & 64--128 days       & 5.3--10.7 hours \\
\(A_6\) & More than 128 days & More than 10.7 hours \\
\hline
\end{tabular}%
}
\end{table}

\subsection{Data Processing}

A uniform procedure is applied to all the pools analysed. First, the initial segment preceding the first recorded transaction is truncated, and days containing fewer than half of the expected intervals are discarded. The statistical characterisation is then performed using robust estimators, specifically the median and the median absolute deviation.

Unlike price and \(\mathrm{TVL}_t\), which are read directly from the reserves in the pool state, dollar-denominated volume is reconstructed externally. The data provider calculates it from the swap events emitted by the smart contract, converting them into dollars using price references external to the pool.

As established in the previous section, the state of the pool is characterised by the invariant \(k\). In the absence of liquidity additions or withdrawals, this invariant increases because of the fees accumulated in each transaction \cite{adams2020uniswapv2}. This increase makes it possible to relate the growth of the invariant to the fees generated, \(\mathrm{Fee}(t)\), and \(\mathrm{TVL}_t\) through the approximation
\[
\Delta\ln k_t
\approx
2c\frac{\mathrm{Fee}(t)}{\mathrm{TVL}_t},
\]
where \(c\) is a calibration factor estimated for each pool, defined as the ratio of the total fee to the portion allocated to the liquidity provider.

Under this accounting criterion, the correction is activated when the observed growth of the invariant is significantly lower than its expected theoretical growth, provided that the reserves remain stable. Formally, the rule is applied when the following conditions are satisfied simultaneously:
\[
\begin{gathered}
\Delta\ln k_t
<
\theta
\left(
2c\frac{\mathrm{Fee}(t)}{\mathrm{TVL}_t}
\right),
\qquad
\frac{V_t}{\mathrm{TVL}_t}>\alpha,
\\[4pt]
\frac{\left|\Delta\mathrm{TVL}_t\right|}{\mathrm{TVL}_t}<\beta,
\end{gathered}
\]
where \(\theta\in(0,1)\) determines the minimum accepted fraction of the expected growth of the invariant, \(\alpha>0\) requires the volume to be sufficient for the accounting signal to be observable, and \(\beta>0\) limits variations in \(\mathrm{TVL}_t\) that could correspond to liquidity additions or withdrawals. The operational values used in the implementation are
\[
\theta=0.3,
\qquad
\alpha=\beta=0.02
\]

When these conditions are satisfied, the reconstructed volume \(V_t\) is replaced by the implicit volume supported by the observed growth of the invariant. This correction was applied to 50 intervals across 6 of the 18 pools, with no material effect on the results.

%------------------------------------
\section{Results}

To answer the questions posed in the previous section, we present the results of the analysis in the following order. First, the original $m$ signal is presented together with its scale components, this decomposition provides the basis for the first findings of the study, which link the persistence of the pool's expected return relative to its risk to the temporal scale from which the signal originates.

Second, the scale-by-scale coupling between a pool's activity and volatility around the event is presented, followed by the viability of the causal construction using a fluctuation-to-trend ratio indicator.

Before examining the detailed results, the labeling of regime-change events in a representative pool from the panel is introduced.

\subsection{Volatility regime and event labeling}

These events represent the regime change to a turbulent state and provide the basis for the analysis and results presented below.

\begin{figure}[htbp]
\centering
\includegraphics[width=\columnwidth]{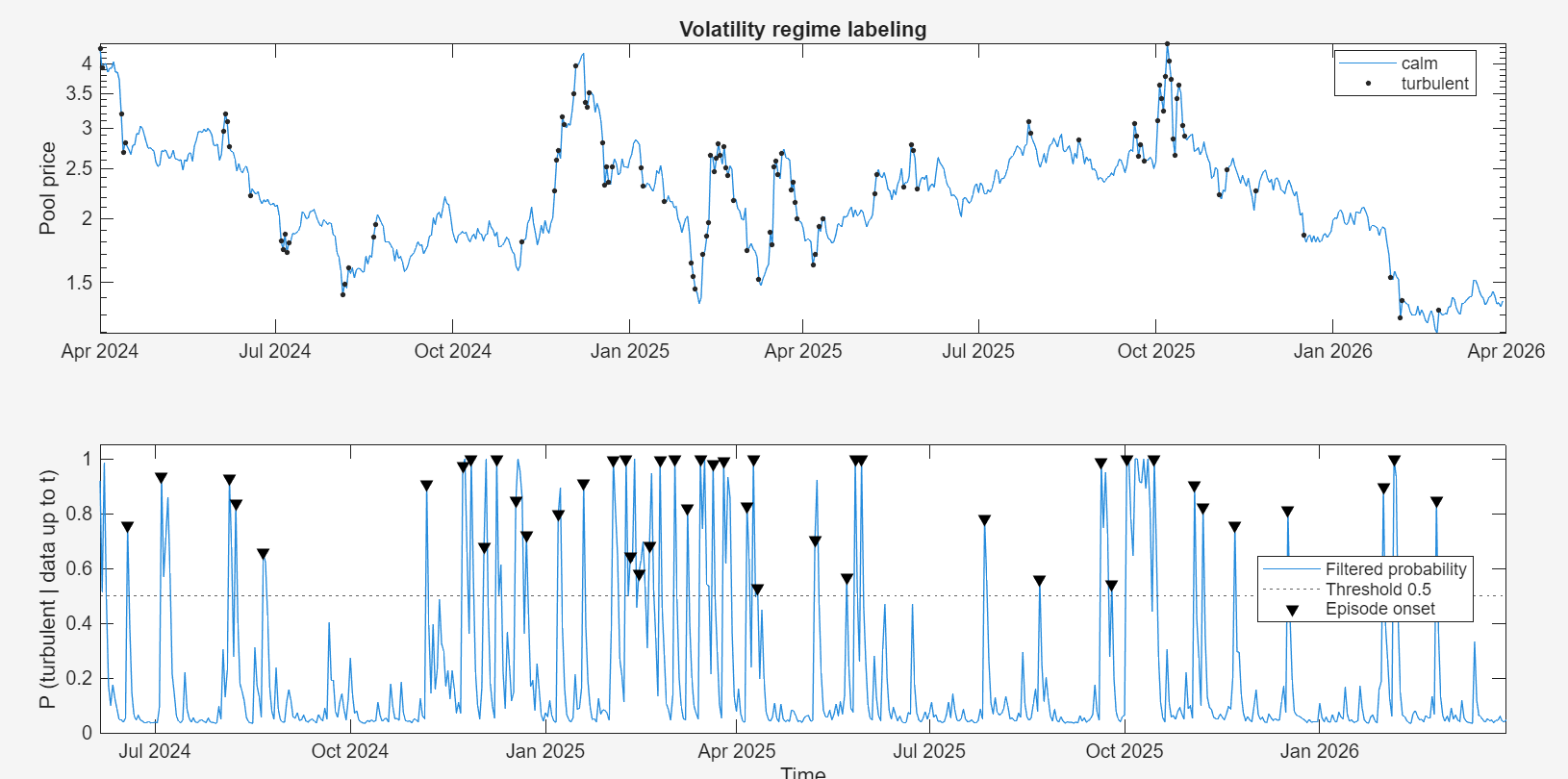}
\footnotesize
\caption{\footnotesize Volatility regime labeling. CAKE/USDT pool (PancakeSwap V2, BNB Chain), between April 1, 2024 and March 30, 2026: 106 turbulent days (15\%) and 47 episode onsets.}
\label{fig:regimen}
\end{figure}

In Figure~\ref{fig:regimen}, regime identification combines two visual signals. In the upper panel, points are displayed on the pool price series during periods of financial stress. The lower panel, in turn, shows the filtered probability of turbulence, which acts as a change indicator; when it exceeds the level of $0.5$ (marked with triangles), the regime change in the pool is formally confirmed.

Under this classification, the identified states show that annualized volatility ranges between $24\,\%$ and $57\,\%$ in the calm state and increases to between $72\,\%$ and $220\,\%$ during periods of turbulence.

To structure the model, we proposed classifying the regimes using price volatility rather than activity, assuming that turbulence is driven by the asset (it is systemic), whereas volume is specific to each pool (it is local). The data validate this choice: when analyzing pools that share the same underlying asset, price volatility exhibits a correlation of $0.92$, in contrast to the $0.38$ obtained for transaction volume.

\subsection{Indicator $m$: Signal persistence according to its scale of origin}

The temporal decomposition of the indicator reveals an uneven distribution of variability. The three fastest scales (from two to sixteen days) account for $63\%$ of the total variance ($31\%$ in the 2--4-day band, $20\%$ in the 4--8-day band, and $12\%$ in the 8--16-day band), which characterizes \(m\) as a series dominated by high-frequency dynamics. Nevertheless, the background component interrupts this decline and contributes $14\%$, showing that the slow dynamics of the indicator constitute a distinct and non-negligible component.

\begin{figure}[htbp]
\centering
\includegraphics[width=\columnwidth]{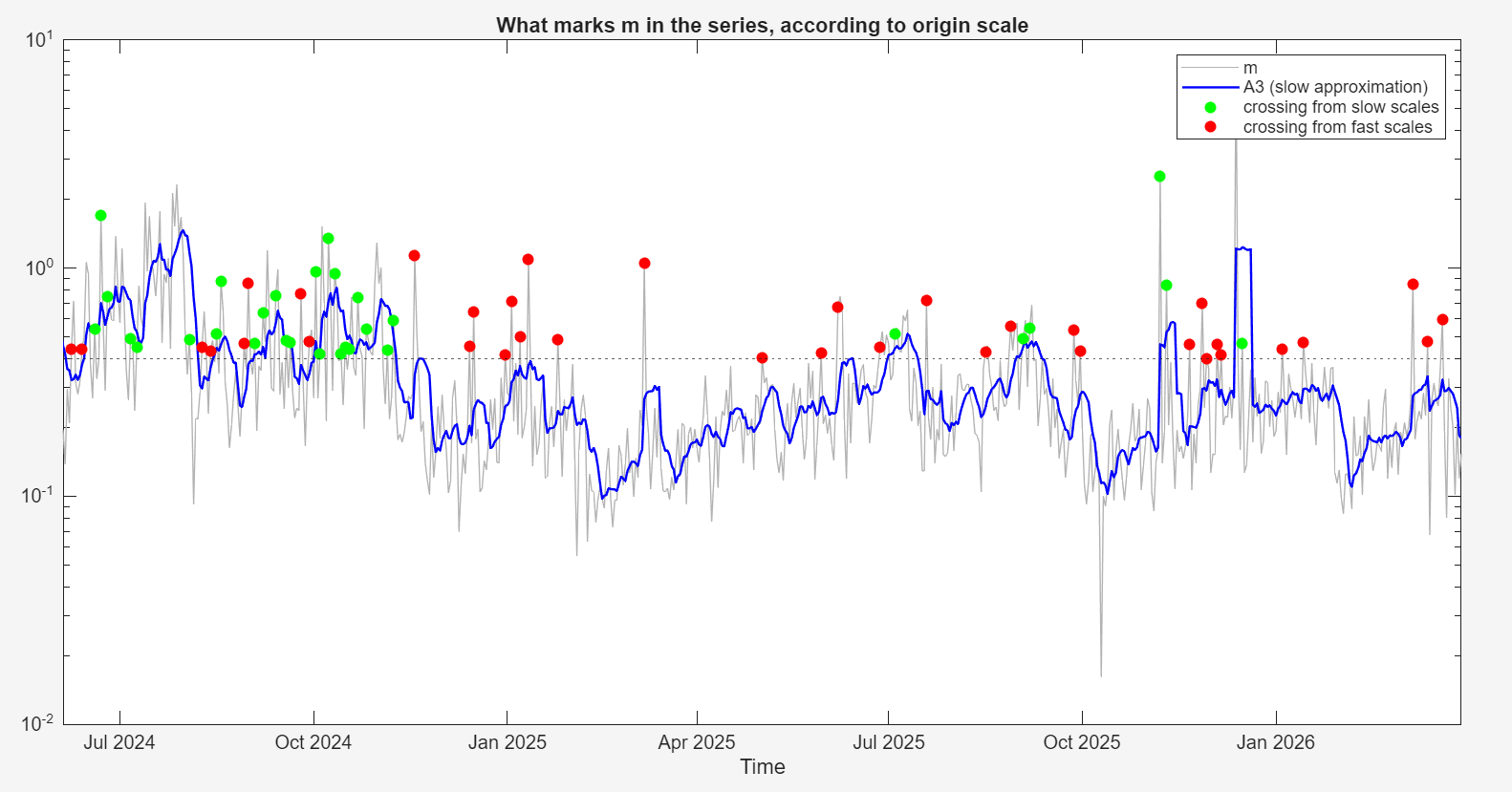}

\caption{\footnotesize Signals of indicator $m$ classified according to their scale of origin. The points indicate upward crossings: green points denote those of slow origin, for which $A_3$ also exceeds the threshold, and red points denote those arising only from recent movement. CAKE/USDT pool (PancakeSwap V2, BNB Chain)}
\label{fig:m-origen}
\end{figure}

This coexistence of rapid fluctuations against a slow-moving background implies that an extreme value may arise from a transitory disturbance or from a sustained elevated level. To distinguish between these two dynamics without distorting the original series, the classification criterion illustrated in Figure~\ref{fig:m-origen} is implemented, where a signal is defined as the day on which the indicator \(m\) crosses above the operating threshold \(m(d)>0.4\), and its duration corresponds to the number of consecutive days for which it remains above this threshold.

Both the crossing and the duration are calculated from the undecomposed series; the decomposition is used to classify the time of the crossing according to whether the average over the previous eight days, the approximation $A_3$, also exceeds the threshold. In that case, the signal is classified as having a slow origin. Thus, Table~\ref{tab:persistence} reports longer durations among signals classified by an elevated eight-day average. This association may partly reflect the persistence already encoded in the classification rule; an autoregressive or moving-average benchmark is needed to assess incremental information.

The \(0.4\) threshold is an exogenous operational choice used to define the crossings and to classify them by origin; the classification rule does not depend on this specific level. Robustness checks using thresholds of \(0.3\) and \(0.5\) yield identical conclusions: median durations of two days versus one, with log-rank \(p\)-values ranging from \(1.6 \times 10^{-38}\) to \(4.9 \times 10^{-33}\).

\begin{table}[H]
\centering
\caption{Duration of \(m\) signals by scale of origin}
\label{tab:persistence}

\resizebox{\columnwidth}{!}{%
\begin{tabular}{lrr}
\hline
Origin of the crossing & Signals & Median duration (days) \\
\hline
Recent fluctuations only (\(A_3\leq 0.4\))       & 636 & 1 \\
Supported by the slow component (\(A_3>0.4\))   & 490 & 2 \\
\hline
\end{tabular}%
}

\medskip

\parbox{\columnwidth}{%
\footnotesize
\textit{Note:} Panel of 18 pools. Duration is measured as the number of consecutive days for which \(m\) remains above the threshold, with right-censoring at the end of the sample. The log-rank test yields \(p=9.7\cdot 10^{-35}\).
}
\end{table}

\subsection{Indicator $m$: Information on future performance}

The same value of \(m\) may arise from a transitory one-day fluctuation or from a level sustained over several weeks. This section evaluates whether, once the level of the indicator is known, this difference in origin provides information about the pool's subsequent performance.

To answer this question, the variable \(m_f\), defined in Section~5.3.2, is used as the comparison variable. This variable captures the pool's return relative to risk over the following \(k\) days by dividing the total fees over the interval by the total accumulated variance. Equivalently, it corresponds to an average of the daily values of \(m\), weighting each day by the risk actually borne by the pool.

The results in Table~\ref{tab:rendimiento_m} show opposite signs of partial rank association at fast and slow scales. For each horizon \(k\), the median partial rank correlation increases monotonically across the seven bands. In particular, for \(k=14\), the \(D_1\) band, corresponding to a period of 2--4 days, has partial rank correlation \(-0.134\), whereas the long-term component \(A_6\) has partial rank correlation \(+0.138\); the sign changes between the 16--32-day and 32--64-day bands. Thus, conditional on the aggregate indicator level, short-scale fluctuations are negatively associated with subsequent performance, whereas slower components are positively associated with it; no causal effect is identified. This structure is consistent: the increasing ordering of the scales remains identical across the three time horizons and is reproduced individually in the 17 pools analyzed.

\begin{table}[H]
\centering
\caption{Information provided by each component of \(m\) about future return-to-risk}
\label{tab:rendimiento_m}
\resizebox{\columnwidth}{!}{%
\begin{tabular}{lrrrc}
\hline
Component & \(k=7\) & \(k=14\) & \(k=30\) & Pools \(-\) / \(+\) \\
\hline
\(D_1\) (2--4 days)      & \(-0.179\) & \(-0.134\) & \(-0.104\) & 17 / 0 \\
\(D_2\) (4--8 days)      & \(-0.100\) & \(-0.077\) & \(-0.057\) & 17 / 0 \\
\(D_3\) (8--16 days)     & \(-0.025\) & \(-0.029\) & \(-0.008\) & 12 / 5 \\
\(D_4\) (16--32 days)    & \(-0.007\) & \(-0.001\) & \(+0.006\) & 9 / 8 \\
\(D_5\) (32--64 days)    & \(+0.041\) & \(+0.045\) & \(+0.066\) & 6 / 11 \\
\(D_6\) (64--128 days)   & \(+0.144\) & \(+0.108\) & \(+0.090\) & 0 / 17 \\
\(A_6\) (more than 128 days) & \(+0.153\) & \(+0.138\) & \(+0.049\) & 3 / 14 \\
\hline
\end{tabular}%
}

\medskip

\parbox{\columnwidth}{%
\footnotesize
\textit{Note:} The values correspond to the partial rank correlation between each component and \(m_f\), controlling for the daily level \(m(d)\); the median across the 17 pools is reported. For \(k=14\), the final column indicates the number of pools in which the contribution of each scale is negative or positive.
}
\end{table}

Taken together, the three fastest bands account for 63\% of the variance of \(m\), but exhibit negative partial rank associations with subsequent performance. Conditional on the same indicator level, fluctuations at scales shorter than eight days are associated with lower performance over the following period, whereas scales longer than sixty-four days make a positive contribution. Despite accounting for only 14\% of the reported variance, the long-term component has one of the strongest positive partial rank associations. Since the components sum algebraically to \(m(d)\), these individual partial correlations are not additive contributions to predictive performance; an out-of-sample comparison against an aggregate-only model would be needed to establish incremental predictive value.

\subsection{Scale structure of the coupling}

To study the coupling between volume and volatility, we use \(\rho_B(t)\) (Definition~\ref{def:correlacion-energias-banda}), with \(x\) representing the pool volume and \(y\) the absolute value of the logarithmic return of its price. Local energies are calculated using \(w=288\) observations, corresponding to one day, and the correlation is calculated over a window of \(W=30\) days. The decomposition is carried out up to \(J=13\), and the undecomposed series is also evaluated.

\begin{figure}[H]
\centering
\includegraphics[width=\columnwidth]{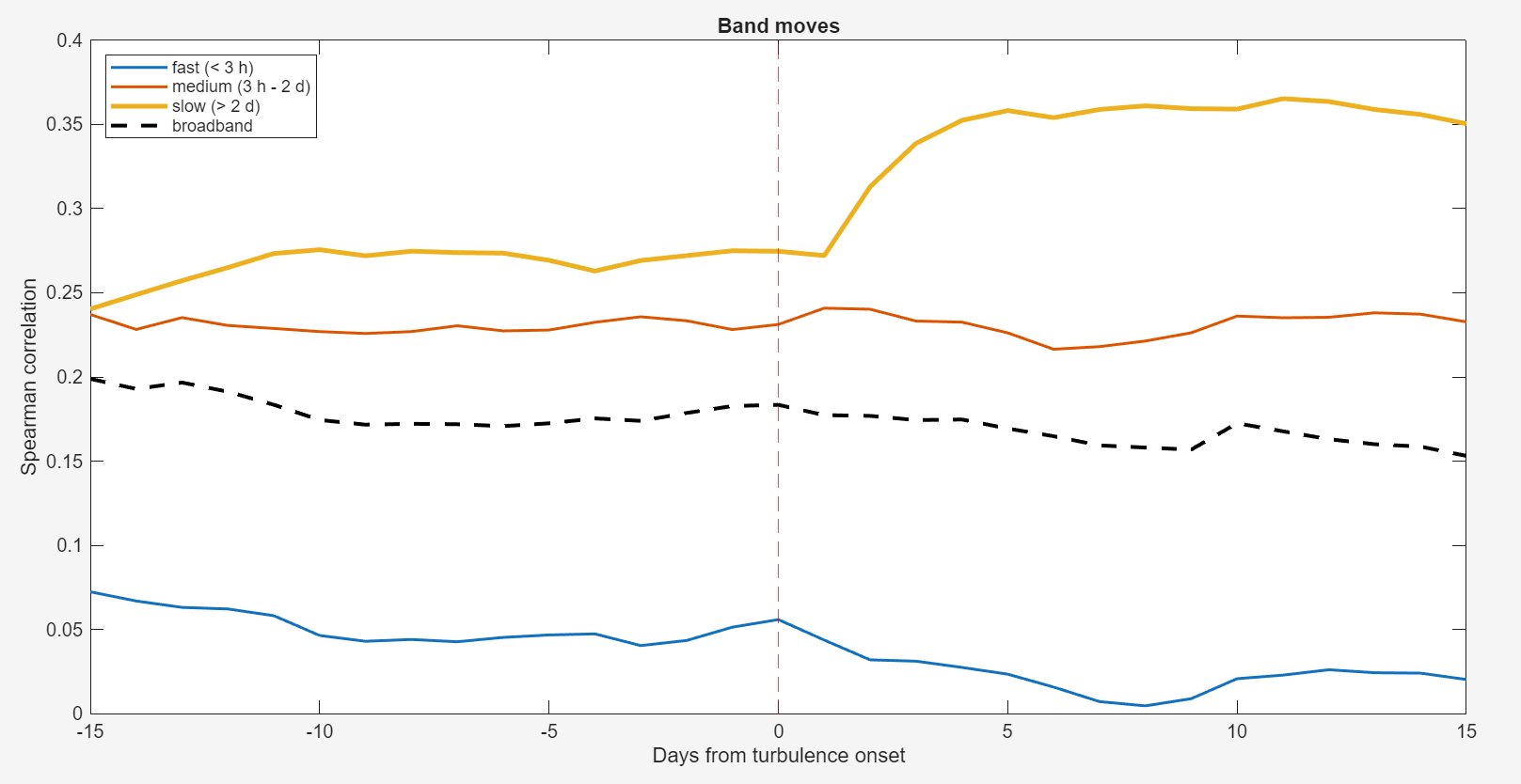}
\caption{\footnotesize Coupling between volume and volatility around the onset of turbulence episodes.}
\label{fig:coupling}
\end{figure}

Using these correlations, we conduct an event study using a $\pm 15$-day event window centered on the 146 turbulence onsets detected in the panel of seven pools (Figure~\ref{fig:coupling}). These onsets are considered isolated because no other onset occurred during the preceding ten days. The regime and the coupling are calculated separately: the labeling provides the onset dates without entering into the calculation of the correlation series.

\begin{table}[htbp]
\centering
\caption{Band-specific coupling around the onset of turbulence}
\label{tab:coupling}
\resizebox{\columnwidth}{!}{%
\begin{tabular}{lrrr}
\hline
Band & before \([-7,-1]\) & after \([+1,+7]\) & after \([+8,+15]\) \\
\hline
Fast (\(D_1\)--\(D_4\), period \(<\) 3 h)   & \(+0.05\) & \(+0.03\) & \(+0.02\) \\
Medium (\(D_5\)--\(D_8\), 3 h--2 d)         & \(+0.23\) & \(+0.23\) & \(+0.23\) \\
Slow (\(D_9\)--\(D_{13}\), \(>\) 2 d)       & \(+0.27\) & \(+0.34\) & \(+0.36\) \\
Undecomposed                                & \(+0.18\) & \(+0.17\) & \(+0.16\) \\
\hline
\end{tabular}%
}

\medskip

\parbox{\columnwidth}{%
\footnotesize
\textit{Note:} Spearman correlation \(\rho_B(t)\). The values are correlations,
bounded between \(-1\) and \(1\); a value of zero indicates no association between
activity and volatility in that band.
}
\end{table}

As shown in Table~\ref{tab:coupling}, the coupling increases with scale even far from the events: the slow band starts at a level five times that of the fast band. The observed increase is concentrated in the slow band: its correlation rises by $0.09$ and continues to rise through day $+15$, whereas the reported fast and medium correlations change little. Statistical significance of these event-time differences has not been established without dependence-aware confidence intervals.

\subsection{Detection capability of the multiresolution indicator \(H_i^c\)}

In this final part, we analyze the ability of the causal indicator \(H_i^c\) (Definition~\ref{def:indicador-causal-hc}) to detect regime transitions by comparing it with a simple estimator evaluated under the same assessment scheme, in order to isolate the contribution of the multiresolution decomposition.
The simple estimator is the daily median of the absolute five-minute log returns of the pool price---a robust measure of intraday volatility that requires no decomposition. Likewise, the daily value of \(H_2^c\) is the median of the indicator computed from the five-minute series. Both are evaluated using the alarm rule with \(q=0.90\).
Figure~\ref{fig:indicador-causal-h} shows the application of the indicator to one of the pools in the panel. The upper panel shows the evolution of the price, whereas the lower panel displays the indicator \(H_2^c\) at a daily frequency. Increases in the indicator reflect periods in which fluctuations gain greater weight relative to the trend.

\begin{figure}[htbp]
\centering
\includegraphics[width=\columnwidth]{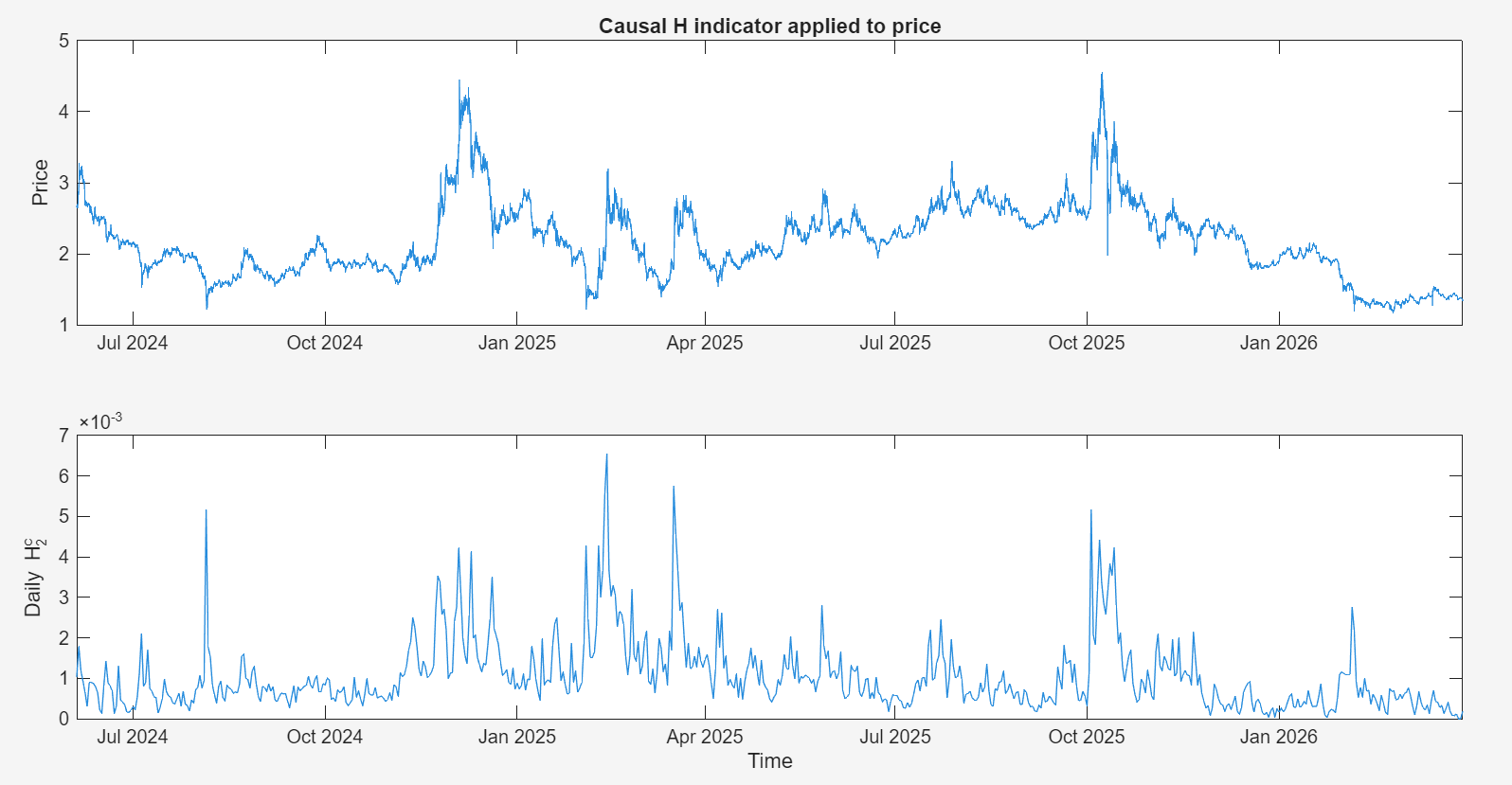}
\caption{\footnotesize Evolution of the price and the indicator \(H_2^c\) in one of the pools in the panel.}
\label{fig:indicador-causal-h}
\end{figure}

When the indicator is applied to the pool price with a depth of \(i=2\), \(H_2^c\) captures \(42\%\) of turbulence onsets with a median latency of zero days; that is, the alarm is triggered on the same day that the episode begins. The simple estimator identifies the same proportion of episodes but has a median latency of one day. As a trade-off, the simple estimator records one fewer false alarm per year: \(9.5\), compared with \(10.5\) for the multiresolution indicator.

%------------------------------------
\section{Conclusions}

This study develops a past-looking multiresolution decomposition for liquidity-pool time series in decentralized finance. Through this approach, a central statement regarding the temporal structure of these markets is formulated: pool variability is concentrated in the fast part of its spectrum, whereas its structure—understood as the relationship between series, signal persistence, and predictive information about subsequent performance—resides in the slow part. These two dimensions are indistinguishable under traditional aggregate metrics, such as means or global variances, because calculations over the complete series merge them before they can be compared.

The wavelet features can be calculated from historical observations at each date. A fully real-time interpretation of the regime-labeling exercise additionally requires estimation of all Markov-model parameters using only information available at that date; otherwise the event labels remain retrospective. On this basis, applying the method to the pools analyzed yields three main findings.

The first finding shows that, for the same aggregate indicator level, its scale composition provides information about subsequent performance. At the fourteen-day horizon, the partial rank correlation ranges from \(-0.134\) in \(D_1\) to \(+0.138\) in \(A_6\), with a sign change between the 16--32-day and 32--64-day bands. This ordering is maintained across the three horizons evaluated. The second finding concerns the structure of the coupling between activity and volatility: following regime transitions, the correlation of the slow band increases from \(0.27\) to \(0.36\), whereas the correlation of the undecomposed series remains around \(0.18\). The third finding supports the viability of the causal construction, which matches the detection capability of a simple estimator and identifies the onset of episodes on the same day, compared with the one-day delay of the benchmark estimator.

The magnitude of the effect is moderate and should be characterized accordingly: for observations with the same indicator level, the difference in performance between extreme compositions is approximately \(12\%\) over a fourteen-day horizon. Moreover, the decomposition does not improve the predictive capability of a simple smoothing procedure or the detection power of an elementary robust estimator. Its contribution is descriptive: it identifies the nominal scales at which the observed associations are strongest, without establishing causal mechanisms or incremental out-of-sample predictive performance.

The scope of this study is limited to eighteen constant-product pools observed over two years; its results are strictly associational, and uncertainty quantification for overlapping-window correlations and sensitivity to the choice of decomposition depth remain to be established. The most natural extension concerns temporal resolution: turbulence episodes are short-lived, and the daily frequency limits the precision with which a detector can locate them. Therefore, adapting the procedure to an intraday scale represents the direction with the greatest potential for improvement.

%% TODO-NP: nota de disponibilidad del código (los dos scripts van como ancillary files de arXiv en anc/, o a un repositorio). Una frase antes de la bibliografía, p. ej. en un párrafo sin numerar «Code availability». Conservar de la intro del apéndice lo de MATLAB R2025a, solo funciones base y que se evita modwtmra por no ser causal.

\bibliographystyle{amsplain}
\bibliography{bibliography}

\end{document}